\def\cc{{\mathcal{C}}}
\def\ch{{\mathcal{H}}}
\def\a{\alpha}
\def\b{\beta}
\def\l{\lambda}
\def\p{\varphi}
\def\th{\theta}
\def\d{\delta}
\def\De{\Delta}
\def\o{\omega}
\def\O{\Omega}
\def\R{{\mathbb R}}
\def\C{{\mathbb C}}
\def\N{{\mathbb N}}
\def\D{{\mathbb D}}
\def\T{{\mathbb T}}
\def\e{\varepsilon}
\def\g{\gamma}
\def\G{\Gamma}
\def\ep{\varepsilon}
\def\z{\zeta}
\def\ovl{\overline}
\def\dist{{\rm dist}}
\def\bs{~\hfill\rule{7pt}{7pt}}
\newtheorem{theorem}{Theorem}
\newtheorem*{definition}{Definition}
\newtheorem{proposition}[theorem]{Proposition}
\begin{document}

\title[Blaschke-type conditions]
{Blaschke-type conditions for analytic functions in the unit disk:
inverse problems and local analogs}
\author[S. Favorov and L. Golinskii]{S. Favorov and L. Golinskii}

\address{Mathematical School, Kharkov National University, Swobody sq.4,
Kharkov, 61077 Ukraine}
\email{Sergey.Ju.Favorov@univer.kharkov.ua}

\address{Mathematics Division, Institute for Low Temperature Physics and
Engineering, 47 Lenin ave., Kharkov 61103, Ukraine}
\email{leonid.golinskii@gmail.com}

\date{\today}

\keywords{} \subjclass{Primary: 30D50; Secondary: 31A05, 47B10}

\begin{abstract}
We continue the study of analytic functions in the unit disk of
finite order with arbitrary set of singular points on the unit
circle, introduced in \cite{FG}. The main focus here is made upon
the inverse problem: the existence of a function from this class
with a given singular set and zero set subject to certain
Blaschke-type condition. We also discuss the local analog of the
main result from \cite{FG} similar to the standard local Blaschke
condition for analytic and bounded functions in the unit disk.
\end{abstract}

\maketitle

\section{Introduction}

In early 20s V. V. Golubev completed his treatise entitled ``The
study on the theory of singular points of single valued analytic
functions'' which was published as a series of papers in 1924--1927
in ``Uchenye zapiski gosudarstvennogo saratovskogo universiteta''
(the edition hardly known nowadays to the former Soviet Union
experts and completely unavailable to the Western audience).  A book
\cite{Gol} which came out in 1961 in Russian, after the author's
death, contains this treatise as the second part. The author made an
attempt to develop the theory of functions in the unit disk
$\D=\{|z|<1\}$ by using the similarity between such functions and
the entire functions which arises if one views the unit circle
$\T=\{|t|=1\}$ as an analog of the unique singular point at infinity
for entire functions. Golubev managed to prove a number of results
which were latter attributed to either the ``mathematical folklore''
or other authors. For instance, he was by far the first one to show
that for a function $f$ of finite order at most $\rho$ in $\D$
\begin{equation}\label{finor}
|f(z)|\le h(1-|z|), \quad z\in\D, \qquad
h(x)=\exp\left(\frac1{x}\right)^{\rho}, \quad \rho>0,
\end{equation}
its zero set $Z(f)=\{z_n\}$ (each zero $z_n$ is counted according to
its multiplicity) obeys a Blaschke-type condition
\begin{equation}\label{blcond}
\sum_{z_n\in Z(f)}(1-|z_n|)^{\rho+1+\ep}<\infty, \qquad \forall\ep>0
\end{equation}
(see, e.g., \cite[Chapter II, \S 1]{Gol}). This result was extended
to a wide class of weight functions $h$ by F.~Shamoyan \cite{Sh2}.

Let $E$ be an arbitrary closed subset of the unit circle. In
\cite[p. 113]{AC76} P.~Ahern and D. Clark defined a {\it type of
$E$} by
\begin{equation}\label{degspar}
\b(E):=\sup\{\b\in\R:\ |E_x|=O(x^\b), \ x\to 0\},
\end{equation}
where
$$ E_x:=\{t\in\T: \ \hat d(t,E)<x\}, \qquad x>0, $$
an $x$-neighborhood of $E$, $\hat d(t_1,t_2)$ is the length of the
shortest arc with endpoints $t_1, t_2$, $|E_x|$ its normalized
Lebesgue measure ($|\T|=1$). This characteristic was suggested by
the authors in their study of inner functions with derivatives in
certain functional classes. It is clear that $0\le\b(E)\le1$,
$\b(E)=1$ for finite (and close in a sense to finite) sets, and
$\b(E)=0$ for all sets $E$ of positive Lebesgue measure (see Section
\ref{local} for more results and examples concerning $\b(E)$).

In a recent paper \cite{FG} we introduced a class $\ch(\rho,E)$ of
analytic functions in $\D$ of finite order at most $\rho$ having an
arbitrary closed set $E\subset\T$ as the set of singular points,
i.e.,
\begin{equation}\label{finore}
|f(z)|\le \exp\left(\frac{C}{d^\rho(z,E)}\right), \quad z\in\D,
\qquad d(z,E)={\rm dist}\,(z,E).
\end{equation}
Being unaware of \cite{AC76}, \footnote{The authors thank I. E.
Chyzhykov for drawing our attention to this paper.} we defined the
value $\b(E)$ in an equivalent way (cf. Proposition \ref{integr}
below) and proved in \cite[Theorem 3]{FG}, that the Blaschke-type
condition for such functions is
\begin{equation}\label{blconde}
\sum_{z_n\in Z(f)}(1-|z_n|)\,d^{(\rho-\b(E)+\ep)_+}(z_n,E)<\infty,
\qquad \forall\ep>0,
\end{equation}
where, as usual, $a_+:=\max(a,0)$. So \eqref{blconde} turns into
\eqref{blcond} for $E=\T$.

One of the goals of the present paper is to study the inverse
problems: given a closed set $E\subset\T$ and a discrete set, i.e.,
having no limit points in $\D$, $Z\subset\D$ subject to
Blaschke-type condition \eqref{blconde}, whether there exists a
function $f\in\ch(\rho,E)$ such that $Z=Z(f)$. The standard and well
respected method here is to use various canonical products
(Weierstrass, Nevanlinna, Golubev, Djrbashian, Tsuji). The problem
is to pick one with the smallest possible growth near $E$, once the
convergence exponent of zeros is known. The first example of such
type is due to Golubev (see Proposition \ref{golub} below). Some
related results for the case $E=\T$ can be found in \cite{Dj, G, G1,
L1, mamo, N, Sh2, T}.

Our contribution in that direction is the following

\begin{theorem}\label{maininv}
Let $E=\bar E\subset\T$, $\rho>0$, and $Z=\{z_n\}_{n\ge1}\subset\D$
satisfy
\begin{equation}\label{blconin}
K:=\sum_{n\ge1}(1-|z_n|)d^\rho(z_n,E)<\infty.
\end{equation}
Then there exists an analytic function $f\in\ch(\rho+1,E)$ such that
$Z(f)=Z$. \end{theorem}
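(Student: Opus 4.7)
The plan is to construct $f$ explicitly as a Blaschke-type canonical product, with each factor multiplied by a correction that controls its growth near $E$. For every $z_n$, fix a nearest point $e_n\in E$ (so that $|z_n-e_n|=d(z_n,E)$) and look for $f$ in the form
$$
 f(z)=\prod_{n\ge 1}\frac{z_n-z}{1-\bar z_n z}\,\exp P_n(z),
$$
where $P_n$ is holomorphic in $\D$ with its only potential singularity at the boundary point $e_n\in\T$. A natural candidate, in the spirit of the Djrbashian--Golubev canonical products, is the polynomial
$$
 P_n(z):=\sum_{k=1}^{p}\frac{\zeta_n(z)^k}{k},\qquad
 \zeta_n(z):=\frac{1-|z_n|^2}{1-\bar e_n z},
$$
with an integer $p\asymp\rho$. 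Its role is to cancel the leading-order terms in the Taylor expansion of $-\log b_{z_n}(z)$ in powers of $\zeta_n(z)$, so that each factor of the product is close to $1$ outside any neighborhood of $e_n$.

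The core estimate is a Taylor remainder bound
$$
 \Bigl|\log\frac{z_n-z}{1-\bar z_n z}+P_n(z)\Bigr|\le C\,\frac{(1-|z_n|)^{p+1}}{|1-\bar e_n z|^{p+1}},
$$
valid outside a fixed small hyperbolic neighborhood of $z_n$. Using the elementary geometric inequalities $1-|z_n|\le d(z_n,E)$ and $|1-\bar e_n z|\ge d(z,E)$, together with the hypothesis \eqref{blconin}, one sums over $n$ and obtains a bound of the form
$$
 \sum_n\Bigl|\log\frac{z_n-z}{1-\bar z_n z}+P_n(z)\Bigr|\le\frac{CK}{d(z,E)^{\rho+1}}.
$$
Since $\log|b_{z_n}|\le 0$ in $\D$, this produces the growth bound $\log|f(z)|\le CK/d(z,E)^{\rho+1}$ required for $f\in\ch(\rho+1,E)$, and the same pointwise estimate yields locally uniform convergence of the product on $\D\setminus E$, so that $f$ extends analytically to $\D$ with $Z(f)=Z$.

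The main technical obstacle is the balancing of exponents in the summation step. One must pick $p$ large enough that the Taylor remainder is genuinely small, yet small enough that the transition from $|1-\bar e_n z|^{-(p+1)}$ down to $d(z,E)^{-(\rho+1)}$ does not lose more than a bounded amount, while the surplus powers of $(1-|z_n|)$ in the numerator get converted into $d^{\rho}(z_n,E)$ via $1-|z_n|\le d(z_n,E)$. A Whitney-type dyadic decomposition of the index set according to $d(z_n,E)$, combined with a case split based on the relative size of $|z-z_n|$ and $d(z_n,E)$, should carry this out; the extra power of $d(z,E)^{-1}$ left over in the final bound --- compared with the summability exponent $\rho$ in \eqref{blconin} --- accounts exactly for the gap between $\ch(\rho,E)$ and $\ch(\rho+1,E)$ in the conclusion.
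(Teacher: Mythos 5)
Your construction as written breaks down at the key estimate. The natural expansion variable for the logarithm of a Blaschke factor is $u_n(z)=\frac{1-|z_n|^2}{1-\bar z_n z}$, not your $\zeta_n(z)=\frac{1-|z_n|^2}{1-\bar e_n z}$: one has the identity $\frac{\bar z_n(z_n-z)}{1-\bar z_n z}=1-u_n(z)$, so the correct primary factor is the Weierstrass factor $W(u_n(z),p)$ of Proposition \ref{weiboun} (the classical Djrbashian--Tsuji product). With your $\zeta_n$ the difference $u_n-\zeta_n=(1-|z_n|^2)\frac{(\bar z_n-\bar e_n)z}{(1-\bar z_n z)(1-\bar e_n z)}$ is a first-order quantity, so $P_n$ fails to cancel even the leading term of $-\log b_{z_n}$ and the remainder is of size comparable to $|\zeta_n|$, not $O(|\zeta_n|^{p+1})$. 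There is also a normalization problem: $\frac{z_n-z}{1-\bar z_n z}$ equals $z_n$ at $z=0$ and is nowhere near $1$; concretely, at $z=0$ the logarithm of the modulus of your $n$-th factor is $\log|z_n|+\sum_{k\le p}(1-|z_n|^2)^k/k=\tfrac12(1-|z_n|^2)+O((1-|z_n|)^2)$, and $\sum_n(1-|z_n|)$ need not converge under \eqref{blconin}, so the product itself diverges at the origin.

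Even after repairing the factor to $W(u_n,p)$, the step you defer to a ``Whitney-type dyadic decomposition'' is exactly where the theorem lives, and it does not go through for this product. The bound $|u_n(z)|^{p+1}\le C(1-|z_n|)^{p+1}/|1-\bar z_n z|^{p+1}$ has to be converted into $C(1-|z_n|)d^\rho(z_n,E)/d^{\rho+1}(z,E)$; this would require $|1-\bar z_n z|\ge c\,d(z,E)$, which fails when $z$ and $z_n$ are near each other but far from $E$ (there $d(z,E)\asymp1$ while $|1-\bar z_n z|$ can be as small as $1-|z_n|^2$), and for non-integer $\rho$ the leftover power $d^{\,p-\rho}(z_n,E)/|1-\bar z_n z|^{\,p-\rho}$ is unbounded. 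The paper circumvents both difficulties by a different choice of primary factor: it maps $\D$ to the upper half-plane by a M\"obius map $g_{e_n}$ sending $e_n\mapsto\infty$ and uses Nevanlinna factors $N_p(g_{e_n}(z),g_{e_n}(z_n))=W(\cdot/\omega_n,p)/W(\cdot/\ovl{\omega}_n,p)$. The quotient structure inserts the factor $|\sin\arg\omega_n|\asymp(1-|z_n|)/d(z_n,E)$ into every estimate, and an interpolation between the exponents $p$ and $p+1$ (see \eqref{nevupper1}--\eqref{nevupper2}) produces the term-wise bound $C(1-|z_n|)d^\rho(z_n,E)/|e_n-z|^{\rho+1}$ for every real $\rho>0$, which sums to $CK/d^{\rho+1}(z,E)$; convergence of the product is then obtained separately via a Montel--Vitali argument, since the term-wise bounds control $\log|N_p|$ only from above. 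You would need to import this mechanism, or an equivalent device supplying the factor $d^\rho(z_n,E)$, for your product to work.
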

\smallskip

Next, we turn to local versions of the Blaschke conditions, which
are less known and appreciated by experts. In the notation
$B(a,r)=\{z\in\C:\ |z-a|<r\}$, let $f$ be an analytic and bounded
function in the lune
$$ L_\tau(t)=\D\cap B(t,\tau), \qquad t\in\T. $$
Then the portion of its zero set inside any interior lune $L(t,\d)$,
$\d<\tau$ is subject to the Blaschke condition. Indeed, denote by
$F=F_\tau$ a conformal mapping from $L_\tau(t)$ to $\D$,
$\Phi=F^{(-1)}$, so $g=f(\Phi)$ is a bounded, analytic function in
$\D$, hence
$$ \sum_{w_n\in Z(g)}(1-|w_n|)=\sum_{z_n\in
Z(f)}(1-|F(z_n)|)<\infty, $$ and the more so
$$ \sum_{z_n\in Z(f)\cap L_\d}(1-|F(z_n)|)<\infty $$
for any interior lune $L_\d\subset L_\tau$. Due to the well-known
properties of conformal mappings (see also the explicit expression
for $F$ in Section \ref{local})
$$ 1-|F(z)|\asymp 1-|z|, \qquad z\in L_\d, $$
(as usual, $a\asymp b$ means $c\le a/b\le C$), $\{z_n\in Z(f)\cap
L_\d\}$ is the Blaschke sequence, as claimed.

In Section \ref{local} of the paper we develop certain extensions of
such local Blascke condition for the class $\ch(\rho,E)$ (see
Theorem \ref{localfg}).

\section{Canonical products in the disk and inverse
problems}\label{inverse}

\subsection{Prime factors of Weierstrass and Nevanlinna}
We begin with the bounds for Weierstrass and Nevanlinna prime
factors.

\begin{definition}
A Weierstrass prime factor of order $p$ is
\begin{equation}\label{primew}
W(z,p):=(1-z)\exp\left(\sum_{k=1}^p \frac{z^k}{k}\right), \quad
p=1,2,\ldots, \quad W(z,0)=1-z.
\end{equation}
\end{definition}
The following bounds for $W$ are well known.
\begin{proposition}\label{weiboun}
We have
\begin{enumerate}
    \item $|\log W(z,p)|\le \frac32\,|z|^{p+1}$,
    $p=0,1,\ldots$ for $|z|<\frac13$;
    \item $\log|W(z,p)|\le A_p|z|^{p}$,
    $p=1,2,\ldots$ for $|z|\ge\frac13$, $A_p=3e(2+\log p)$;
    \item $|1-W(z,p)|\le |z|^{p+1}$,
    $p=0,1,\ldots$ for $|z|\le1$.
\end{enumerate}
\end{proposition}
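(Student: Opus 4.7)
The plan is to handle the three estimates largely separately, with the Taylor identity
$$ \log W(z,p) = \log(1-z) + \sum_{k=1}^p \frac{z^k}{k} = -\sum_{k=p+1}^\infty \frac{z^k}{k}, \qquad |z|<1, $$
as the workhorse for (1), and the derivative computation $W'(z,p)=-z^p e^{P(z)}$ (with $P(z)=\sum_{k=1}^p z^k/k$) as the starting point for (3). The hardest part will be (2).

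For (1), I would take absolute values in the identity to obtain $|\log W(z,p)| \le \sum_{k=p+1}^\infty |z|^k/k \le |z|^{p+1}/((p+1)(1-|z|))$, and then substitute $1-|z|>2/3$. For (3), first verify $W'(z,p)=-z^p e^{P(z)}$ via the telescoping $(1-z)(1+z+\cdots+z^{p-1})=1-z^p$, and then write $1-W(z,p)=\int_0^z \zeta^p e^{P(\zeta)}\,d\zeta$. Since $P$ has non-negative Taylor coefficients, so does $e^P$, and therefore $1-W(z,p)=z^{p+1}g(z)$ with $g$ having non-negative Taylor coefficients. The triangle inequality together with $g(1)=1-W(1,p)=1$ (since $W(1,p)=0$) gives $|1-W(z,p)|\le|z|^{p+1}g(|z|)\le|z|^{p+1}$ for $|z|\le 1$.

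The obstacle in (2) is that the series bound from (1) blows up as $|z|\to 1^-$. My plan is to combine three pieces: the \emph{series bound} $\log|W(z,p)|\le|z|^{p+1}/((p+1)(1-|z|))$ on $|z|<1$; a \emph{direct bound} for $|z|\ge 1$, namely $\log|W(z,p)|\le\log(1+|z|)+H_p|z|^p\le(2+\log p)|z|^p$ via $\log(1+|z|)\le|z|\le|z|^p$ and $H_p=\sum_{k=1}^p 1/k\le 1+\log p$; and a \emph{max-principle bound} on $\overline{\D}$. For the last, note that $\log|W(z,p)|$ is subharmonic in $\D$ and upper semicontinuous on $\overline{\D}$ (with value $-\infty$ at $z=1$), so it attains its maximum on $\T$, where
$$ \log|W(e^{i\theta},p)| = \log|2\sin(\theta/2)| + \sum_{k=1}^p \frac{\cos(k\theta)}{k} \le \log 2 + H_p \le 2+\log p. $$

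To patch these together on $1/3\le|z|\le 1$, I would split at $|z|=1-1/(p+1)$. On $1/3\le|z|\le 1-1/(p+1)$, the series bound gives $\log|W(z,p)|\le|z|^{p+1}\le|z|^p$. On $1-1/(p+1)\le|z|\le 1$, the max-principle bound gives $\log|W(z,p)|\le 2+\log p$, and the elementary inequality $(1-1/(p+1))^p=1/(1+1/p)^p\ge 1/e$ yields $\log|W(z,p)|\le e(2+\log p)|z|^p$. Combined with the direct bound on $|z|\ge 1$, the constant $A_p=3e(2+\log p)$ dominates each piece. The trickiest step is the max-principle bound: once one recognizes $\log|W|$ as subharmonic and upper semicontinuous on the closed disk, the boundary computation is elementary, but that structural reduction is the key to escaping the divergence of the series bound near $\T$.
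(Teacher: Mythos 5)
Your proof is correct. Note that the paper itself offers no proof of this proposition --- it is stated as ``well known'' --- so there is nothing to compare against; but all three of your arguments check out. Part (1) via the tail of the logarithmic series with $1-|z|>2/3$, and part (3) via $W'(z,p)=-z^pe^{P(z)}$, nonnegativity of the Taylor coefficients of $1-W(\cdot,p)=z^{p+1}g(z)$, and normalization $g(1)=1-W(1,p)=1$, are the classical arguments for the Weierstrass primary factor. For part (2), which is the only nontrivial claim, your three-zone splitting works: the series bound gives $\log|W(z,p)|\le|z|^{p+1}\le|z|^p$ on $1/3\le|z|\le 1-\frac{1}{p+1}$; the maximum modulus principle applied to the entire function $W(\cdot,p)$ on $\overline{\D}$ (subharmonicity of $\log|W|$ is more than you need, since $|W|$ is continuous up to $\T$) gives $\log|W(z,p)|\le\log 2+H_p\le 2+\log p\le e(2+\log p)|z|^p$ on the annulus $1-\frac{1}{p+1}\le|z|\le1$ using $(1-\frac1{p+1})^p\ge e^{-1}$; and the direct bound handles $|z|\ge1$. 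Each piece is dominated by $A_p=3e(2+\log p)$, so the stated constant is recovered with room to spare.
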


Following \cite{Nev}, we define a {\it Nevanlinna prime factor of
order $p$} by
\begin{equation}\label{primen}
N_p(w,\o)=
\frac{W\left(\frac{w}{\o},p\right)}{W\left(\frac{w}{\ovl\o},p\right)}\,.
\end{equation}
The result below is actually proved in \cite[Lemma 3.2]{Gov}.
\begin{proposition}\label{nevboun}
Let $\o=|\o|e^{i\tau}$, $\C_+=\{z:\Im z>0\}$ an upper half plane.
Then
\begin{enumerate}
    \item $|\log N_p(w,\o)|\le
    3|\sin\tau|\,\left|\frac{w}{\o}\right|^{p+1}$
    for $|w/\o|<\frac13$;
    \item $\log|N_p(w,\o)|\le C_p|\sin\tau|\,\left|\frac{w}{\o}\right|^{p}$,
    for $|w/\o|\ge\frac13$ and $w,\o\in\C_+$.
    \end{enumerate}
\end{proposition}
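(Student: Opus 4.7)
The plan is to exploit the power-series expansion of $\log W(z,p)$ at the origin, together with the fact that $N_p(w,\o) \equiv 1$ when $\o \in \R$, so any bound must carry a factor $|\sin\tau|$.

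For (1), from the definition $W(z,p) = (1-z)\exp\bigl(\sum_{k=1}^p z^k/k\bigr)$ one reads off $\log W(z,p) = -\sum_{k=p+1}^\infty z^k/k$ for $|z|<1$. Substituting into the definition of $N_p$ and using the identity $\ovl\o^{-k} - \o^{-k} = 2i\sin(k\tau)/|\o|^k$ gives
\[
\log N_p(w,\o) = \sum_{k=p+1}^\infty \frac{2i\sin(k\tau)}{k}\,\frac{w^k}{|\o|^k}.
\]
Combined with the elementary inequality $|\sin(k\tau)|\le k|\sin\tau|$ (a one-line induction from the addition formula) and the geometric sum $\sum_{k\ge p+1}|w/\o|^k = |w/\o|^{p+1}/(1-|w/\o|)$, this gives (1) for $|w/\o|<1/3$, after absorbing $(1-|w/\o|)^{-1}\le 3/2$ into the constant.

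For (2) the series expansion is unavailable since $|w/\o|$ may exceed $1$, so I would split
\[
\log|N_p(w,\o)| = \log\left|\frac{1-w/\o}{1-w/\ovl\o}\right| + \sum_{k=1}^p \frac{1}{k}\bigl(\Re(w/\o)^k - \Re(w/\ovl\o)^k\bigr).
\]
If $w,\o\in\C_+$, writing $w=a+bi$ and $\o=c+di$ with $b,d>0$ one computes $|w-\ovl\o|^2-|w-\o|^2 = 4bd>0$, so the first summand is non-positive and may be dropped. The finite sum is then estimated termwise via $\cos(k\alpha-k\tau) - \cos(k\alpha+k\tau) = 2\sin(k\alpha)\sin(k\tau)$ (where $\alpha = \arg w$), producing a bound of the form $2|\sin\tau|\sum_{k=1}^p |w/\o|^k$, which under $|w/\o|\ge 1/3$ is absorbed into $C_p|\sin\tau||w/\o|^p$ for a suitable constant $C_p$.

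The main obstacle is (2): a naive application of Proposition \ref{weiboun}(2) to each factor separately does not produce the required $|\sin\tau|$ factor, and in any case gives no upper bound on $\log|N_p|$ since one would need a matching \emph{lower} bound on $|W(w/\ovl\o,p)|$, which is not available in general. The hypothesis $w,\o\in\C_+$ is used precisely to make $\log|(1-w/\o)/(1-w/\ovl\o)|$ non-positive so it can be discarded, and the $|\sin\tau|$ dependence then emerges entirely from the trigonometric-polynomial sum.
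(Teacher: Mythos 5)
Your proof is correct and follows essentially the same route as the paper: for (1) the term-by-term expansion $\log N_p=2i\sum_{k>p}\sin(k\tau)|\o|^{-k}w^k/k$ with $|\sin k\tau|\le k|\sin\tau|$ and the geometric tail, and for (2) the split of $\log|N_p|$ into the ratio $\log\bigl(|1-w/\o|/|1-w/\ovl\o|\bigr)$, which is non-positive when $w,\o\in\C_+$, plus the finite trigonometric sum bounded by $2|\sin\tau|\sum_{k=1}^p|w/\o|^k\le C_p|\sin\tau||w/\o|^p$. No gaps.
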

\begin{proof} (1). We have
$$ \log
N_p(w,\o)=\sum_{k=p+1}^\infty\left(\frac1{\bar\o^k}-\frac1{\o^k}\right)\,\frac{w^k}{k}
=2i\sum_{k=p+1}^\infty \frac{\sin k\tau}{|\o|^k}\,\frac{w^k}{k}, $$
and since $|\sin k\tau|\le k|\sin\tau|$ then
$$ |\log N_p(w,\o)|\le
2|\sin\tau|\,\left|\frac{w}{\o}\right|^{p+1}\sum_{j\ge0}\frac1{3^j}=
3|\sin\tau|\,\left|\frac{w}{\o}\right|^{p+1}. $$

(2). Now
$$ |N_p(w,\o)|=\left|\frac{1-w/\o}{1-w/\ovl\o}\right|\,
   \exp\left\{-2\Im
   \sum_{k=1}^p\Im\left(\frac1{\o^k}\right)\,\frac{w^k}{k}\right\} $$
and since $w,\o\in\C_+$, then $|1-w/\o|\le |1-w/\ovl\o|$ so
$$
\log |N_p(w,\o)|\le
2|\sin\tau|\sum_{k=1}^p\,\left|\frac{w}{\o}\right|^{k} \le
C_p|\sin\tau|\,\left|\frac{w}{\o}\right|^{p}.
$$
\end{proof}

Note also that
$$
N_p(w,\o)-1=\frac{W\left(\frac{w}{\o},p\right)-W\left(\frac{w}{\ovl\o},p\right)}
{W\left(\frac{w}{\ovl\o},p\right)}, $$ and by Proposition
\ref{weiboun}, (3)
$$ |N_p(w,\o)-1|\le
\frac{2\left|\frac{w}{\o}\right|^{p+1}}{1-\left|\frac{w}{\o}\right|^{p+1}}<\frac14,
\quad \left|\frac{w}{\o}\right|<\frac13. $$ The standard inequality
for logarithms
$$ \frac12\,|\z|\le|\log(1+\z)|\le \frac32\,|\z|, \qquad |\z|<\frac12 $$
now gives
\begin{equation}\label{nminus1}
|N_p(w,\o)-1|\le 2|\log N_p(w,\o)|\le
6|\sin\tau|\,\left|\frac{w}{\o}\right|^{p+1}, \quad
\left|\frac{w}{\o}\right|<\frac13. \end{equation}

\subsection{Inverse problems}
Canonical products appear to be an appropriate tool for solving
inverse problems. The statement below illustrates this fact nicely
(the idea is due to Golubev, cf. \cite[Chapter I, \S 3]{Gol}).
\begin{proposition}\label{golub}
Let $Z=\{z_n\}\in\D$ satisfy
\begin{equation}\label{golubinv}
K:=\sum_{n\ge1} d^{\rho+1}(z_n,E)<\infty, \qquad \rho>0,
\end{equation}
$E=\ovl E\subset\T$. Then there exists a function
$f\in\ch(\rho+1,E)$ such that $Z(f)=Z$.
\end{proposition}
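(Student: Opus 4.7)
The plan is to construct $f$ as a Golubev-type canonical product adapted to the singular set $E$. For each $n\ge 1$ pick $t_n\in E$ with $|z_n-t_n|=d(z_n,E)$, let $p=\lceil\rho\rceil$ (so that $\rho\le p<\rho+1$), and set
$$
f(z)\;=\;\prod_{n\ge 1} W\!\left(\frac{z_n-t_n}{z-t_n},\,p\right).
$$
Writing $w_n=w_n(z)=(z_n-t_n)/(z-t_n)$, each factor has its only zero at $z_n$ and its only pole at $t_n\in\T$, so it is analytic throughout $\D$; once the product converges, $Z(f)=Z$ follows automatically.

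I would first establish uniform convergence on compacta $K\subset\D$. On such $K$ one has $|z-t_n|\ge\delta>0$, while the summability \eqref{golubinv} forces $d(z_n,E)\to 0$, so eventually $|w_n|\le 1/3$. Proposition~\ref{weiboun}(3) then yields $|W(w_n,p)-1|\le|w_n|^{p+1}\le d(z_n,E)^{p+1}/\delta^{p+1}$, which is summable because $p\ge\rho$ and $d(z_n,E)$ is bounded. Hence $f$ is analytic on $\D$ with $Z(f)=Z$.

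The main step is the growth bound $\log|f(z)|\le C/d^{\rho+1}$ with $d:=d(z,E)$. I would split the indices into three classes. (I) If $d(z_n,E)\le d/3$ then $|w_n|\le 1/3$, and Proposition~\ref{weiboun}(1) together with $|z-t_n|\ge d$ and the bound $d(z_n,E)^{p-\rho}\le d^{p-\rho}$ (valid because $p\ge\rho$ and $d(z_n,E)\le d$) gives the per-term estimate $|\log W(w_n,p)|\le(3/2)\,d(z_n,E)^{\rho+1}/d^{\rho+1}$, summing to $(3/2)K/d^{\rho+1}$ by \eqref{golubinv}. (II) If $d(z_n,E)>d/3$ but $|w_n|\le 1/3$, then Chebyshev applied to \eqref{golubinv} shows there are at most $K\cdot 3^{\rho+1}/d^{\rho+1}$ such indices, and each contributes at most $(3/2)\cdot 3^{-(p+1)}$; the total is again $\le C/d^{\rho+1}$. (III) If $|w_n|\ge 1/3$ (which forces $d(z_n,E)\ge d/3$), Proposition~\ref{weiboun}(2) bounds a factor's log by $A_p\,d(z_n,E)^p/|z-t_n|^p$; using $|z-t_n|\ge d$ together with the identity $d(z_n,E)^p=d(z_n,E)^{\rho+1}\cdot d(z_n,E)^{-(\rho+1-p)}\le d(z_n,E)^{\rho+1}(3/d)^{\rho+1-p}$ (valid since $\rho+1-p>0$) produces a per-term bound $\le C\,d(z_n,E)^{\rho+1}/d^{\rho+1}$, and hence a total $\le CK/d^{\rho+1}$.

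The main obstacle is class (III): the naive use of $A_p|w_n|^p$ looks like it should yield $1/d^{p}$ rather than $1/d^{\rho+1}$, which would be too coarse whenever $p>\rho$. The decisive point is that the constraint $|w_n|\ge 1/3$ automatically forces $d(z_n,E)\ge d/3$, which permits trading the missing power $d^{\rho+1-p}$ of $d$ against a power of $d(z_n,E)$, thereby recovering the summable weight $d(z_n,E)^{\rho+1}$ from the hypothesis \eqref{golubinv}.
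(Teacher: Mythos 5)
Your construction is exactly the paper's Golubev canonical product $\prod_n W\bigl((z_n-e_n)/(z-e_n),p\bigr)$ with the same choice $\rho\le p<\rho+1$, and your growth estimate rests on the same power-trading trick (using $d(z_n,E)\le d$ when $|w_n|$ is small and $d(z_n,E)\gtrsim d$ when $|w_n|$ is large to convert $|w_n|^{p+1}$ and $|w_n|^{p}$ into $d^{\rho+1}(z_n,E)/d^{\rho+1}(z,E)$), so the proposal is correct and essentially identical to the paper's proof. The only cosmetic differences are the cut at $|w_n|=1/3$ instead of $1$ and the extra Chebyshev-counting class (II), which is in fact subsumed by the estimate of class (I).
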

\begin{proof} Denote by $e_n\in E$ one of the closest points to $z_n$, so $d(z_n,E)=|z_n-e_n|$.
Put
$$ G(z):=\prod_{n\ge1} W(v_n,p), \quad v_n=\frac{z_n-e_n}{z-e_n}, $$
$p$ satisfies 
\begin{equation}\label{prho}
\rho\le p<\rho+1, \qquad p\in\N=\{1,2,\ldots\}.
\end{equation}
We call $G$ a {\it Golubev canonical product}. Since $v_n(z)\to 0$
for each $z\in\D$, then $|v_n(z)|\le 1$ for $n\ge n_0(z)$. By
Proposition \ref{weiboun}, (3) and \eqref{prho}
$$ \sum_{n\ge n_0} |W(v_n,p)-1|\le \sum_{n\ge n_0}
|v_n(z)|^{p+1} \le \sum_{n\ge n_0}
|v_n(z)|^{\rho+1}\le\frac{K}{d^{\rho+1}(z,E)}\,, $$
 so the product $G$ converges absolutely and uniformly in $\D$. Write
$$ G(z)=\Pi_1(z)\cdot\Pi_2(z), $$
where
$$ \Pi_1(z)=\prod_{n:|v_n|>1}W(v_n,p), \qquad \Pi_2(z)=\prod_{n:|v_n|\le
1}W(v_n,p). $$

Since $\log|W(v_n,p)|\le |W(v_n,p)-1|$, then as above
$$ \log|\Pi_2(z)|\le \sum_{|v_n|\le 1}|W(v_n,p)-1|\le
\frac{K}{d^{\rho+1}(z,E)}. $$ As for the first factor, by
Proposition \ref{weiboun}, (2), and \eqref{prho}
$$ \log|\Pi_1(z)|\le C_p\sum_{n:|v_n|>1} |v_n|^{p}
\le C_p\sum_{n:|v_n|>1} |v_n|^{\rho+1}\le\frac{C_p
K}{d^{\rho+1}(z,E)},
$$  that leads to the final bound
$$ |G(z)|\le \exp\left\{\frac{B_\rho K}{d^{\rho+1}(z,E)}\right\}. $$
It is clear that the zero set $Z(f)=Z$. \end{proof}

\smallskip

{\it Proof of Theorem \ref{maininv}}. Note that for $E=\T$ we have
exactly Proposition \ref{golub}, so with no loss of generality
assume that $E\not=\T$, and moreover,
\begin{equation}\label{eneravt}
-1\notin E, \qquad d(-1,E)=2\d>0, \quad \d=\d(E).
\end{equation}
We can also assume that $|\l+1|\ge\d$ for all $\l\in Z$, since for
the part of $Z$ with $|\l+1|<\d$ the standard Blaschke condition
holds by \eqref{blconin} and \eqref{eneravt}, so the corresponding
Blaschke product represents this part of $Z$.

\if{It is not hard to see that $Z$ can be split into a disjoint
union of sets $Z=\cup Z_n$, $Z_k\cap Z_j=\emptyset$ for $j\not=k$ so
that
\begin{equation}\label{splitz}
\sum_{z_k\in Z_n} (1-|z_k|)|e_n-z_k|^\rho<\infty, \qquad
n=1,2,\ldots, \qquad e_n\in E.
\end{equation}
In fact, take $e_n\in E$ to be the closest point to $z_n$, and put
\begin{equation}\label{consg}
G_n:=\{z\in Z: |e_n-z|\le 2d(z,E)\}.
\end{equation}
Clearly, $z_n\in G_n$, $Z=\cup G_n$, so there exists a sequence of
disjoint sets $Z_n\subset G_n$ with the union $Z$. \eqref{splitz}
follows directly from the construction of $G_n$ and \eqref{blconin}.
If
$$ K_n:=\sum_{z_k\in Z_n} (1-|z_k|)d^\rho(z_k,E), \qquad K=\sum_n K_n, $$
then by \eqref{consg}
\begin{equation}\label{kn}
\sum_{z_k\in Z_n} (1-|z_k|)|e_n-z_k|^\rho\le 2^\rho\,K_n.
\end{equation}

We label the zeros in $Z_n$ by $Z_n=\{z_{n,j}\}$,
$j=1,2,\ldots,\a_n$ with $\a_n\le\infty$.}\fi

Take $t=e^{i\th}\in E$, $|\th|<\pi$, and consider an auxiliary
linear-fractional mapping $g_t:\D\rightarrow\C_+$
\begin{equation}\label{linfrac}
g_t(\l):=ie^{i\frac{\th}2}\,\frac{1+\l}{t-\l}\,, \qquad
g_t(t)=\infty, \quad g_t(-1)=0.
\end{equation}
It is clear that the following bounds hold for $g$
\begin{equation}\label{boung}
\begin{split}
|g_t(\l)| &\le \frac2{|t-\l|}\,, \qquad \qquad \l\in\D, \\
|g_t(\l)| &\ge \frac{\d}{|t-\l|}\ge\frac{\d}2\,, \qquad |\l+1|\ge\d.
\end{split}
\end{equation}
Similarly, for the imaginary part
$$ \Im g_t(\l)=\cos\frac{\th}2\,\frac{1-|\l|^2}{|t-\l|^2} $$
one has
\begin{equation}\label{bounimg}
\cos\frac{\th}2\,\frac{1-|\l|}{|t-\l|^2}\le\Im g_t(\l)\le
2\,\frac{1-|\l|}{|t-\l|^2}\,, \quad \l\in\D.
\end{equation}
As above, $p$ satisfies \eqref{prho}.

Let $z,\l\in\D$. If $|g_t(\l)|>3|g_t(z)|$, then by Proposition
\ref{nevboun}, (1), and \newline $\rho+1\le p+1$
\begin{equation}\label{nevupper1}
\begin{split}
\left|\log N_p(g_t(z), g_t(\l))\right| &\le 3|\sin\arg
g_t(\l)|\,\left|\frac{g_t(z)}{g_t(\l)}\right|^{p+1} \\ &=\frac{3\Im
g_t(\l)}{|g_t(\l)|^{\rho+2}}\,\left|\frac{g_t(z)}{g_t(\l)}\right|^{p+1-\rho-1}\,
|g_t(z)|^{\rho+1} \\ &\le \frac{3\Im g_t(\l)}{|g_t(\l)|^{\rho+2}}\,
|g_t(z)|^{\rho+1}.
\end{split}
\end{equation}
If $|g_t(\l)|\le 3|g_t(z)|$, then by Proposition \ref{nevboun}, (2),
and $p<\rho+1$
\begin{equation}\label{nevupper2}
\begin{split}
\log|N_p(g_t(z), g_t(\l))| &\le C_\rho|\sin\arg
g_t(\l)|\,\left|\frac{g_t(z)}{g_t(\l)}\right|^{p} \\
&=\frac{C_\rho\Im
g_t(\l)}{|g_t(\l)|^{\rho+2}}\,\left|\frac{g_t(z)}{g_t(\l)}\right|^{\rho+1-p}\,
|g_t(z)|^{\rho+1} \\ &\le C_\rho\,2^{\rho+1-p}\,\frac{\Im
g_t(\l)}{|g_t(\l)|^{\rho+2}}\, |g_t(z)|^{\rho+1}.
\end{split}
\end{equation}

In view of \eqref{boung}--\eqref{bounimg} and \eqref{nminus1} we
finally have for $z\in\D$, $|\l+1|\ge\d$
\begin{equation}\label{nevupper3}
\log|N_p(g_t(z), g_t(\l))|\le
C_{\rho,\d}\,\frac{(1-|\l|)|t-\l|^\rho}{|t-z|^{\rho+1}}\,,
\end{equation}
and
\begin{equation}\label{nevupper4}
|N_p(g_t(z), g_t(\l))-1|\le 2|\log N_p(g_t(z), g_t(\l))|\le
C_{\rho,\d}\,\frac{(1-|\l|)|t-\l|^\rho}{|t-z|^{\rho+1}}\,,
\end{equation}
if in addition $|g_t(\l)|>3|g_t(z)|$.
\smallskip

Let, as above, $e_n\in E$ be one of the closest points to $z_n$.
Consider a Nevanlinna canonical product (cf. \cite[Chapter 1]{Gov})
\begin{equation}\label{brick}
f(z):=\prod_{n=1}^{\infty}\,N_p(g_n(z),g_n(z_n)), \qquad
g_n=g_{e_n}.
\end{equation}
By \eqref{nevupper3} its partial products $f_m$ are uniformly
bounded in the disk
\begin{equation}\label{unifboun1}
\begin{split}
|f_m(z)| &=\prod_{n=1}^{m}\,|N_p(g_n(z),g_n(z_n))| \\
&\le\exp\left\{C_{\rho,\d}\sum_{n=1}^m\frac{(1-|z_n|)|e_n-z_n|^\rho}{|e_n-z|^{\rho+1}}\right\}
\le\exp\left\{\frac{C_{\rho,\d}K}{d^{\rho+1}(z,E)}\right\}\,,
\end{split}
\end{equation}
so the family $\{f_m\}$ is locally bounded in $\D$, and by Montel's
Theorem it is normal there. We will show that it converges on a
curve in $\D$ to a function, which is not identically zero.

Put $\eta=\frac14\,\d^2$ and let $z=x\in I=(-1,-1+\eta)$. Then for
$t\in E$
$$
|g_t(x)|=\frac{|1+x|}{|t-x|}<\frac{\eta}{|t+1|-|1+x|}<\frac{\eta}{2\d-\eta}<\frac{\d}6.
$$
On the other hand, as we have assumed, $|z_k+1|\ge\d$ for all
$z_k\in Z$, so by \eqref{boung} $|g_t(z_k)|>\frac12\d$ and hence
$$ |g_t(z_k)|>3|g_t(x)|, \qquad x\in I, \quad z_k\in Z. $$
Therefore \eqref{nevupper4} applies, and
$$ \sum_{n\ge1}|N_p(g_n(x),g_n(z_n))-1|<\infty, $$
so \eqref{brick} converges absolutely and uniformly on $I$ to a
function, which is not identically zero, as claimed. Hence, by
Vitali's Theorem the product $f$ converges uniformly on compact
subsets of $\D$ and as in \eqref{unifboun1}
\begin{equation}\label{bounbrick}
\log|f(z)|\le \frac{C_{\rho,\d}\,K}{d^{\rho+1}(z,E)}\,.
\end{equation}
It is clear that $Z(f)=Z$, and the proof is complete. \bs

\medskip

Let $E$ be a closed set on $\T$, $f$ a function of finite order in
$\D$ with the singular set $E$. We define the {\it order of growth
of $f$ near $E$} by
   $$
 \rho_E[f]=\inf\{\rho\in\R:\log|f(z)|<C_f d^{-\rho}(z,E)\}.
  $$
Let $Z=\{z_n\}$ be a set of points in $\D$ counted according to
their multiplicity. We define the {\it convergence exponent with
respect to $E$} by
  $$
 \rho_E[Z]=\inf\{\rho\in\R:\sum_{z_n\in
 Z}(1-|z_n|)d^\rho(z_n,E)<\infty\}.
  $$
In these terms the main result of \cite{FG} states that
$$ \rho_E[f]\le\rho \Rightarrow\rho_E[Z(f)]\le (\rho-\b(E))_+\,, $$
$\b(E)$ is the type of $E$ \eqref{degspar}. On the other hand,
Theorem \ref{maininv} is equivalent to the following: for an
arbitrary $Z$ with $\rho_E[Z]\le\rho$ there is a function $f$ such
that $\rho_E[f]\le\rho+1$ and $Z(f)=Z$. The two results can be
combined in one two sided inequality
\begin{equation}\label{twosidein}
\rho_E[Z]+\b(E)\le\inf_{f:Z(f)=Z}\rho_E[f]\le\rho_E[Z]+1.
\end{equation}
The example below shows that the right bound in \eqref{twosidein} is
attained for arbitrary $E$.

For $0<r<1$, $0\le\th<2\pi$ put
$$ \Box(r,\th):=\{z\in\D: r\le|z|\le\frac{1+r}2, \quad |\arg
z-\th|\le\pi(1-r)\}. $$ A theorem of Linden \cite[Theorem II]{L1}
claims that for a function of finite order $\rho_\T[f]$ in $\D$
\begin{equation}\label{lind1}
\nu_f(r,\th):=\sharp\{z\in
Z(f)\cap\Box(r,\th)\}=O\left(\frac1{(1-r)^{(\rho_\T[f]+\ep)}}\right),
\quad r\to 1
\end{equation}
for any $\th$ and $\ep>0$. Here $\sharp(A)$ is a number of points in
a set $A$.

Let $Z\subset\D$ and
$$ \nu(r,\th,Z):=\sharp\{z\in Z\cap\Box(r,\th)\}. $$
By Linden's theorem any lower bound for $\nu(r,\th,Z)$ for at least
one value of $\th$ implies the lower bound for the order
$\rho_\T[f]$ of each $f$ with $Z(f)\supset Z$.

{\bf Example}. Assume with no loss of generality that $1\in E$, and
put
\begin{equation}\label{zone}
Z=\{z_n\}: \quad
z_n=r_n=1-\left(\frac1{n+1}\right)^{\frac1{\rho+1}}, \quad \rho>0.
\end{equation}
Given $0<r<1$ write $k=k(r)$ so that
$$ r_{k-1}<r\le
r_k<r_{k+1}<\ldots<r_{k+l_k-1}\le\frac{1+r}2<r_{k+l_k}. $$ It is
clear that
\begin{equation*}
\begin{split}
\frac1{(1-r)^{\rho+1}}-1 &\le k<\frac1{(1-r)^{\rho+1}}\,, \\
(2^{\rho+1}-1)k &\le l_k<(2^{\rho+1}-1)k+2^{\rho+1}+1.
\end{split}
\end{equation*}
Hence $\nu(r,0,Z)=l_k\asymp (1-r)^{-\rho-1}$, and for $f$ with
$Z(f)\supset Z$
$$ \nu_f(r,0)\ge \nu(r,0,Z)\ge \frac{C}{(1-r)^{\rho+1}}\,. $$
By Linden's theorem $\rho_\T[f]\ge\rho+1$, and finally
$\rho_E[f]\ge\rho_\T[f]\ge\rho+1$.

On the other hand, $d(z_n,E)=1-r_n$, so $\rho_E[Z]=\rho$, and we
come to an equality
$$ \inf_{f:Z(f)=Z}\rho_E[f]=\rho_E[Z]+1=\rho+1, $$
as claimed.

\smallskip

For the sets $E$ close to finite ones \eqref{twosidein} turns into
equality
\begin{equation}\label{righteq}
\inf_{f:Z(f)=Z}\rho_E[f]=\rho_E[Z]+1,
\end{equation}
for the sets $E$ of positive Lebesgue measure we have
\begin{equation}\label{poslebme}
\rho_E[Z]\le\inf_{f:Z(f)=Z}\rho_E[f]\le\rho_E[Z]+1.
\end{equation}
In the case $E=\T$ Linden \cite{L1} proved that in fact
$$ \rho_{\T}[Z]\le\rho_{\T}[f]\le\rho_{\T}[Z]+1 $$
for all functions $f$ with $Z(f)=Z$. And even more so, for any $R$
in $[\rho_{\T}[Z], \rho_{\T}[Z]+1]$ there is such function $f$ that
$\rho_{\T}[f]=R$.

\section{Local Blaschke-type conditions}\label{local}

\subsection{Type of closed set}
Let $E=\bar E\subset\T$,
$$ E_x:=\{t\in\T: \ \hat d(t,E)<x\}, \qquad x>0 $$
an $x$-neighborhood of $E$, $\hat d(t_1,t_2)$ is the length of the
shortest arc with endpoints $t_1, t_2$, $|E_x|$ its normalized
Lebesgue measure. It is easily checked that
\begin{equation}\label{neib}
E_x=\bigcup\G: \ \G {\rm \ an \  open \
 arc}, \quad |\G|=x, \quad
\G\bigcap E\not=\emptyset. \end{equation} Being an open set,
$E_x=\cup_{j=1}^\o I_j$ is a disjoint union of open arcs. It is easy
from \eqref{neib} that $|I_j|\ge x$, and so
$$\o<\infty, \qquad E\bigcap I_j\not=\emptyset $$
for all $j=1,2,\ldots,\o$.

Some elementary properties of $|E_x|$ are listed in \cite[Section
2]{FG}. There is another one, which is a bit more subtle
\begin{equation}\label{sublin}
|E_{rx}|\le r |E_x|, \quad r\ge1.
\end{equation}
Indeed, it is verified directly when $E$ is a finite set. In
general, as a compact set, $E$ contains a finite $\ep$-net $\hat E$
$\forall \ep>0$, i.e., $\hat E\subset E\subset \hat E_\ep$. Hence
$$ |E_{rx}|\le |\hat E_{rx+\ep}|\le r|\hat E_{x+\ep/r}|\le
r|E_{x+\ep/r}| $$ and tend $\ep$ to $0$, as claimed.

There is a simple way to compute $|E_x|$ in terms of the
complimentary arcs of $E$. Let
$$
\T\setminus E=\bigcup_j\,\g_j, \qquad|\g_j|\downarrow0.
$$ Then
\begin{equation}\label{type1}
|E_x|=\sum_{j=N+1}^\infty |\g_j|+\frac{2N}{\pi}\arcsin\frac{x}2+|E|,
\end{equation}
where $N=N(x)$ is taken from
\begin{equation}\label{type2}
|\g_{N+1}|\le\frac2{\pi}\,{\arcsin\frac{x}2}<|\g_N|.
\end{equation}

The number $\b(E)$ \eqref{degspar}, introduced by P. Ahern and D.
Clark, is called the type of $E$. It is clear that $0\le\b(E)\le1$,
and
\begin{equation}\label{monota}
E_1\subset E_2\Rightarrow \b(E_1)\ge \b(E_2).
\end{equation}

Based on \eqref{type1}--\eqref{type2}, it is easy to manufacture
examples of countable sets $E$ with prescribed values of
$\b(E)\in[0,1]$. For instance, $\b(E)=1$ for
$$
E:=\left\{e^{i\p_n}: \p_n=\sum_{k=n}^\infty \frac1{2^k} \right\}\cup
\{1\},
$$ $\b(E)=1-\frac1{\g}$, $\g>1$, for
$$
E=\left\{e^{i\p_n}: \p_n=c\sum_{k=n}^\infty \frac1{k^\g}, \quad
\g>1\right\}\cup\{1\}, \quad c^{-1}=\sum_{k=1}^\infty
\frac1{k^\g}\,,
$$ $\b(E)=0$ for
$$
E=\left\{e^{i\p_n}: \p_n=c\sum_{k=n}^\infty \frac1{k\log^2
k}\right\}\cup\{1\}, \quad c^{-1}=\sum_{k=2}^\infty \frac1{k\log^2
k}\,.
$$
For the generalized Cantor set $\cc_\o$ (the standard Cantor set is
$\cc_{1/3}$)
$$ \b(\cc_\o)=1-d(\o), \qquad d(\o)=\frac{\log 2}{\log 2-\log(1-\o)} $$
is the Hausdorff dimension of $\cc_\o$.

The relation between $\b(E)$ and the value $I(\b,E)$ introduced in
\cite{FG} is straightforward.

\begin{proposition}\label{integr}
Given $E=\ovl E\subset\T$, let
$$ I(\b,E):=\int_0^\pi \frac{|E_x|}{x^{\b+1}}\,dx\le\infty, \qquad \b\in\R. $$
Then $\b<\b(E)$ \ $(\b>\b(E))$ implies $I(\b,E)<\infty$ \
$(I(\b,E)=\infty)$. \end{proposition}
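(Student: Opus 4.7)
The plan is to handle the two implications separately, using (i) the monotonicity of $x \mapsto |E_x|$ and (ii) the fact that the set $S(E) := \{\beta \in \R : |E_x| = O(x^\beta) \text{ as } x \to 0\}$, whose supremum is $\b(E)$, is a down-set: if $\beta \in S(E)$ and $\beta'' < \beta$, then $|E_x| = O(x^\beta) = O(x^{\beta''})$ for small $x$. Consequently, for every $\beta < \b(E)$ there exists $\beta' \in S(E)$ with $\beta < \beta' \le \b(E)$.

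For the first implication ($\beta < \b(E) \Rightarrow I(\beta, E) < \infty$), I would pick such a $\beta'$, so that $|E_x| \le C x^{\beta'}$ on some interval $(0, x_0]$. Splitting the integral at $x_0$, the piece on $(0, x_0]$ is bounded by $C \int_0^{x_0} x^{\beta' - \beta - 1}\,dx < \infty$ since $\beta' - \beta > 0$, while on $[x_0, \pi]$ the integrand is bounded (recall $|E_x| \le 1$) and integrability is trivial. This step is routine and requires no extra ideas.

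For the second implication, I would argue by contradiction: suppose $\beta > \b(E)$ yet $I(\beta, E) < \infty$. Since $\b(E) \ge 0$, we have $\beta > 0$. Set
\[
\phi(x) := \int_0^x \frac{|E_y|}{y^{\beta+1}}\,dy,
\]
so that $\phi$ is finite, continuous and nondecreasing on $(0, \pi]$ with $\phi(x) \to 0$ as $x \to 0^+$. Using that $|E_y|$ is nondecreasing in $y$ and that $y^{-\beta-1} \ge (2x)^{-\beta-1}$ on $[x, 2x]$, a doubling-trick estimate gives
\[
\phi(2x) - \phi(x) \ge |E_x| \int_x^{2x} y^{-\beta-1}\,dy \ge \frac{|E_x|}{2^{\beta+1}\,x^\beta}.
\]
Hence $|E_x|/x^\beta \le 2^{\beta+1}(\phi(2x) - \phi(x)) \to 0$, so in particular $|E_x| = O(x^\beta)$, forcing $\beta \in S(E)$ and therefore $\beta \le \b(E)$, a contradiction.

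There is no serious obstacle: the key step is the doubling device in the reverse direction, which converts finiteness of the integral into pointwise decay $|E_x| = o(x^\beta)$, enough to contradict the strict inequality $\beta > \b(E)$. Edge considerations (the choice of $x_0$, and the fact that $\b(E) \ge 0$ makes $\beta > 0$ automatic in the reverse direction) are minor.
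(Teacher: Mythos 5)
Your proof is correct. The forward direction is identical to the paper's. The reverse direction reaches the same conclusion by a genuinely different device: the paper first establishes the sublinearity property $|E_{rx}|\le r|E_x|$ for $r\ge1$ (proved separately via finite $\ep$-nets), which makes $u\mapsto|E_u|/u$ decreasing, and then bounds $\int_0^y |E_x|x^{-\beta-1}\,dx$ from below by $\frac{|E_y|}{y}\int_0^y x^{-\beta}\,dx$ to get $|E_y|\le(1-\beta)y^{\beta}$ directly (this needs the preliminary observation that $I(\beta,E)<\infty$ forces $\beta<1$). You instead use only the trivial monotonicity of $x\mapsto|E_x|$ together with a dyadic doubling estimate over $[x,2x]$, converting convergence of the integral into $|E_x|=o(x^{\beta})$. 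Your route is more elementary in that it bypasses the sublinearity lemma and the $\beta<1$ issue entirely, and it yields a slightly stronger pointwise conclusion ($o$ rather than $O$); the paper's route buys an explicit constant in the bound $|E_y|\le(1-\beta)y^{\beta}$ and reuses a property of $|E_x|$ that it records anyway. Both arguments establish exactly the stated dichotomy, since in either case one concludes $\beta\le\beta(E)$ from $I(\beta,E)<\infty$.
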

\begin{proof} If $\b<\b(E)$, then by definition
$|E_x|=O(x^{\b+\ep})$ for some $\ep>0$, so the integral converges.

Conversely, assume that $I(\b,E)<\infty$. Then $\b<1$, and as by
\eqref{sublin} $|E_u|u^{-1}$ is a decreasing function in $u$, we
have for all small enough $y>0$
$$ 1\ge\int_0^y \frac{|E_x|}{x^{\b+1}}\,dx\ge
\frac{|E_y|}{y}\,\int_0^y\frac{dx}{x^\b}=\frac{|E_y|}{y}\,\frac{y^{1-\b}}{1-\b}\,,
$$
so $|E_y|\le(1-\b)y^\b$, and hence $\b\le\b(E)$. The proof is
complete. \end{proof}

Now, \cite[Theorem 3]{FG} states that
$$ f\in\ch(\rho,E)\Rightarrow\rho_E[Z(f)]\le (\rho-\b(E))_+. $$

The following property of $\b(E)$ proves helpful.
\begin{proposition}\label{confinv}
Let $E=\bar E\subset\G$, $\G$ is an open arc of $\T$. Let $F$ be an
analytic function in the domain $\O\supset\G$, $|F|=1$ on $\G$, and
$|F'|\asymp 1$ in $\O$. Then $\b(F(E))=\b(E)$. \end{proposition}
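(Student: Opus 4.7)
The plan is to exploit the hypotheses on $F$ to show that it is bi-Lipschitz on $E$ with respect to the arc-length metric $\hat d$ of $\T$, and to use this to compare the measures $|(F(E))_x|$ and $|E_x|$ up to multiplicative constants. Once this is done, the equality $\b(F(E))=\b(E)$ follows immediately from the definition \eqref{degspar}.

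For the bi-Lipschitz property I would parametrize $\G$ by arc length: since $|F|=1$ on $\G$, one may write $F(e^{i\th})=e^{i\psi(\th)}$ with $|\psi'(\th)|=|F'(e^{i\th})|\asymp 1$. Hence $\psi$ is locally bi-Lipschitz, so there exist constants $c_1,c_2>0$ with
$$ c_1\hat d(t,s)\le \hat d(F(t),F(s))\le c_2\hat d(t,s) $$
whenever $t,s\in\G$ are sufficiently close in $\hat d$. Using compactness of $E\subset\G$ together with the local injectivity provided by $|F'|\ge c>0$, I would cover $E$ by a finite collection of small closed subarcs $J_1,\ldots,J_N\subset\G$ on each of which $F$ is injective and the above bound holds unconditionally; moreover $|F(A)|\asymp|A|$ for every arc $A\subset J_i$.

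For the measure comparison I would then choose $x$ so small that $E_{x/c_1}\subset\bigcup_i J_i$ and $(F(E))_x\subset F(\G)$, which is possible since $E$ is compact and $F(E)$ has positive distance to the endpoints of $F(\G)$ in $\T$. The bi-Lipschitz inclusions
$$ F(E_{x/c_2})\subset (F(E))_x\subset\bigcup_i F(J_i\cap E_{x/c_1}) $$
then hold, and applying $|F(A)|\asymp|A|$ on each $J_i$ together with the injectivity of $F|_{J_i}$ yields
$$ C_1|E_{x/c_2}|\le |(F(E))_x|\le C_2|E_{x/c_1}|. $$
Combined with the monotonicity property \eqref{sublin} to absorb the factors $c_1,c_2$, this gives $|(F(E))_x|\asymp|E_x|$ as $x\to 0$, and hence $\b(F(E))=\b(E)$ by \eqref{degspar}.

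The main technical point I foresee is precisely the possible failure of \emph{global} injectivity of $F$ on $\G$, which forces the finite covering argument and introduces a harmless combinatorial factor $N$ into the bounds above. Once this is handled, the passage from the metric estimate on $\G$ to the measure comparison of neighborhoods on $\T$ is routine.
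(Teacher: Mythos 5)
Your proposal is correct and takes essentially the same route as the paper, whose entire proof is the one-line assertion $|E_x|\asymp|F(E_x)|\asymp|(F(E))_x|$; your bi-Lipschitz parametrization of $F$ on $\G$ and the finite covering by arcs of injectivity simply supply the details behind that comparison, after which \eqref{degspar} (with \eqref{sublin} to absorb the rescaling of $x$) gives $\b(F(E))=\b(E)$ exactly as in the paper.
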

\begin{proof} Under the assumptions we have $|E_x|\asymp |F(E_x)|\asymp |(F(E))_x|$, as needed.
\end{proof}

\subsection{Local analogs}
A number $\b(\z)=\lim_{\d\to0}\b(E\cap\overline{B(t,\d)})$ will be
called a {\it local type of the closed set $E\subset\D$ at the point
$t\in E$}. For $t\in\D\setminus E$ we put $\b(t)=+\infty$. It is
clear that $\b(t)\ge\b(E)$ for each $t\in E$.

We are aimed at proving a local version of \cite[Theorem 3]{FG}, as
stated below.

\begin{theorem}\label{localfg}
Let  $E$ be a closed subset of $\T$, and  $t\in E$. Assume that $f$
is an analytic function in the lune $L_\tau(t)$, $0<\tau<1$, and
$$ \log|f(z)|\le\frac{C}{d^\rho(z,E)}\,, \qquad z\in L_\tau(t). $$
Then for any $\e>0$ there exists $\d=\d(\ep,t,E)>0$, $\d<\tau$ such
that inside the interior lune $L_\d(t)$
\begin{equation}\label{localbl}
\sum_{z\in Z(f)\cap L_\d(t)}(1-|z|)
\dist^{(\rho-\b(t)+\ep)_+}(z,E)<\infty.
\end{equation}
\end{theorem}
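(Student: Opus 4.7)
My plan is to reduce Theorem \ref{localfg} to the global result \cite[Theorem 3]{FG} by conformally transplanting a suitable lune onto $\D$ and invoking the conformal invariance of $\b$ supplied by Proposition \ref{confinv}. First, by the monotonicity \eqref{monota}, the function $\s\mapsto\b(E\cap\overline{B(t,\s)})$ is non-decreasing as $\s\downarrow 0$, so it has a limit, which equals $\b(t)$. Given $\ep>0$, I choose $\s_0<\tau/4$ so that $\b(E_0)\ge\b(t)-\ep/2$ with $E_0:=E\cap\overline{B(t,2\s_0)}$, and set $\d:=\s_0/10$. The key consequence is that for every $z\in L_\d(t)$, the bound $d(z,E)\le|z-t|<\d$ forces the nearest $E$-point to $z$ to lie in $\overline{B(t,2\d)}\subset\overline{B(t,2\s_0)}$, so that $d(z,E)=d(z,E_0)$ throughout $L_\d(t)$.

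Next, I fix an intermediate radius $\s_0'\in(2\s_0,\tau)$ and construct an explicit conformal map $F:L_{\s_0'}(t)\to\D$, obtained by composing a linear fractional transformation sending one corner of the lune to $\infty$, a power function that flattens the resulting opening angle, and a map of the half-plane onto $\D$. This $F$ extends analytically across the open arc $\G_0:=\T\cap B(t,\s_0')$, sends $\G_0$ onto an arc of $\T$, and satisfies $|F'(z)|\asymp 1$ on any subarc compactly contained in $\G_0$. Since $E_0\subset\T\cap\overline{B(t,2\s_0)}$ is bounded away from the two corners of $L_{\s_0'}(t)$, this bound holds on a neighborhood of $E_0$, and Proposition \ref{confinv} yields $\b(F(E_0))=\b(E_0)\ge\b(t)-\ep/2$.

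Set $\Phi:=F^{-1}$, $\ti E:=F(E_0)$, and $g:=f\circ\Phi$. The hypothesis on $f$ gives $\log|g(w)|\le C/d^\rho(\Phi(w),E)$ for all $w\in\D$. I split this estimate: when $\Phi(w)$ is close to $E_0$, $|F'|\asymp 1$ gives $d(\Phi(w),E)=d(\Phi(w),E_0)\asymp d(w,\ti E)$; when $\Phi(w)$ stays bounded away from $\G_0$, $d(\Phi(w),E)$ is bounded below by a positive constant. Combining, one obtains $g\in\ch(\rho,\ti E)$. Applying \cite[Theorem 3]{FG} with parameter $\ep/2$ then produces
$$\sum_{w_n\in Z(g)}(1-|w_n|)\,d^{(\rho-\b(\ti E)+\ep/2)_+}(w_n,\ti E)<\infty,$$
and since $\rho-\b(\ti E)+\ep/2\le\rho-\b(t)+\ep$ while the relevant zeros (those with $z_n=\Phi(w_n)\in L_\d(t)$) satisfy $d(w_n,\ti E)\le 1$ (because $d(z_n,E)<\d$ is small), the same series with the enlarged exponent $(\rho-\b(t)+\ep)_+$ also converges.

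Finally, restricting to $z_n\in Z(f)\cap L_\d(t)$ and using the interior estimates $1-|w_n|\asymp 1-|z_n|$ together with $d(w_n,\ti E)\asymp d(z_n,E_0)=d(z_n,E)$, I transport the bound back to obtain \eqref{localbl}. The main obstacle I anticipate is the careful coordination of the three parameters $\s_0<\s_0'<\tau$ and $\d$: $\s_0$ must be small enough for the $\b$-limit to be close to $\b(t)$; $\s_0'$ must sit strictly between $2\s_0$ and $\tau$ so that $E_0$ remains interior to $\G_0$ and away from the corners of $L_{\s_0'}(t)$; and $\d$ must ensure that $L_\d(t)$ falls in a region where both the derivative estimate $|F'|\asymp 1$ and the two-sided distance comparisons hold uniformly.
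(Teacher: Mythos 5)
Your overall strategy --- transplant a small lune onto $\D$ by a conformal map, use Proposition \ref{confinv} to preserve the type, apply the global theorem, and pull the Blaschke sum back --- is exactly the paper's, and several of your individual estimates (the choice of $\s_0$ with $\b(E_0)\ge\b(t)-\ep/2$, the identity $d(z,E)=d(z,E_0)$ on $L_\d(t)$, the interior comparisons $1-|w_n|\asymp 1-|z_n|$, the monotonicity in the exponent) are sound. But there is a genuine gap at the central step, namely the claim that $g=f\circ\Phi\in\ch(\rho,\ti E)$ with $\ti E=F(E_0)$, $E_0=E\cap\overline{B(t,2\s_0)}$. The hypothesis only gives $\log|g(w)|\le C/d^\rho(\Phi(w),E)$ with the \emph{full} set $E$, and nothing in your construction prevents $E$ from meeting $\overline{L_{\s_0'}(t)}\setminus\overline{B(t,2\s_0)}$, or from clustering at the two corners of $L_{\s_0'}(t)$ from outside. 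At such points $d(\Phi(w),E)$ can be arbitrarily small while $d(w,\ti E)$ stays bounded below, so the required inequality $\log|g(w)|\le C'/d^\rho(w,\ti E)$ fails; your dichotomy (``$\Phi(w)$ close to $E_0$'' versus ``$\Phi(w)$ bounded away from $\G_0$'') simply does not cover the region near $\G_0$ but away from $E_0$. Concretely, if $E=\{t\}\cup K$ with $K$ of positive measure located at distance about $2.5\s_0$ from $t$, then $\b(t)=1$ and your $\ti E$ is a single point, yet $g$ blows up along the whole arc $F(K)$, so the global theorem with singular set $\ti E$ is inapplicable; enlarging $\ti E$ to swallow $F(K)$ would drop its type to $0$ and ruin the exponent $(\rho-\b(t)+\ep)_+$.

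The repair needs two ingredients you omitted. First, a comparison $d(z,E)\asymp d(z,E\cap\overline{L_\xi})$ valid on all of $L_\xi$ (not only on the small interior lune), which controls the points of $E$ \emph{outside} the mapped lune, in particular those accumulating at its corners; the paper proves this as \eqref{dist1}, splitting on whether a vertex of the lune lies in $E$. Second, and more importantly, when $\b(t)>0$ one must choose the radii so that the mapped lune contains \emph{no} points of $E$ beyond the small piece whose type is close to $\b(t)$: the paper selects $\d<\eta$ with $E\cap(\overline{L_\eta}\setminus\overline{L_\d})=\emptyset$ and with the vertices of both lunes off $E$, which is possible precisely because $\b(t)>0$ forces $|E\cap \overline{B(t,\xi)}|=0$ for small $\xi$, so that the set of radii met by $E$ has measure zero and gaps exist. (When $\b(t)=0$ no such care is needed: one maps the whole lune, takes $\ti E=F(E\cap\overline{L_\tau})$, and the target exponent is just $\rho+\ep$.) Without this gap condition, no amount of coordination of $\s_0$, $\s_0'$ and $\d$ will make the transplanted function belong to $\ch(\rho,\ti E)$.
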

\begin{proof} With no loss of generality we put $t=1$ and for any $\xi\le\tau$
denote
$$ L_\xi:=L_\xi(1), \qquad E^\xi:=E\cap\overline L_\xi, $$
a portion of $E$ in the closure of $L_\xi$, $L_\xi\subseteq L_\tau$.

Let us show first that
\begin{equation}\label{dist1}
d(z,E)\asymp d(z,E^\xi), \qquad z\in L_\xi.
\end{equation}
Indeed, write $L_\xi=L_\xi^+\cup L_\xi^-$ with
$L_\xi^+=L_\xi\cap\{\Im z\ge0\}$, $L_\xi^-=L_\xi\cap\{\Im z<0\}$,
and $e^{\pm i\a}$, $\a>0$, for the vertices of $L_\xi$. If
$e^{i\a}\in E$ then for all $z\in L_\xi^+$
$$ d(z,E^\xi)\le d(z,e^{i\a})\le d(z,E\setminus E^\xi), $$
so $d(z,E)=\min\{d(z,E^\xi),d(z,E\setminus E^\xi)\}=d(z,E^\xi)$. If
$e^{i\a}\notin E$ then $d_+:=d(\bar L_\xi^+,E\setminus E^\xi)>0$, so
$$ d(z,E)\ge \frac{d_+}2\,d(z,E^\xi). $$
The argument for $L_\xi^-$ is the same, and we are done. So,
\begin{equation}\label{localbou}
\log|f(z)|\le\frac{C}{d^\rho(z,E^\xi)}\,, \qquad z\in L_\xi, \quad
\forall \xi\le\tau.
\end{equation}

We will distinguish two situations.

\noindent
 1. Let $\b(1)=0$, and so $\b(E^\xi)=0$ for all $\xi<\tau$. Let
 $F_\tau$ be a conformal mapping of $L_\tau$ onto $\D$,
 $\Phi_\tau=F_\tau^{(-1)}$ the inverse mapping. The explicit
 expression for $F_\tau$ is available
  $$
F_\tau(z)=\frac{i\l(z)+1}{\l(z)+i},\qquad
\l(z)=\left(\frac{(z-e^{-i\a})(1-e^{i\a})}{(z-e^{i\a})(1-e^{-i\a})}\right)^{1/\kappa},
 $$
where $\pi\kappa$ is the internal angle of $L_\tau$ at the vertices
$e^{\pm i\a}$. Note that $\kappa\in(1/2,\,2/3)$ whenever $\tau<1$.
We see that $|F'_\tau(z)|\le C$, $z\in\overline L_\tau$, hence
\begin{equation}\label{dist2}
 d(F_\tau(z_1),F_\tau(z_2))\le Cd(z_1,z_2), \qquad
z_1,z_2\in\overline L_\tau. \end{equation}

The function $g=f(\Phi)$ is analytic in $\D$, and by
\eqref{localbou} with $\xi=\tau$ and \eqref{dist2}
$$ \log|g(w)|\le \frac{C}{d^\rho(\Phi(w),E^\tau)}\le
\frac{C}{d^\rho(w,F_\tau(E^\tau))}\,. $$
 By the global result and $Z(g)=F_\tau(Z(f))$
$$ \sum_{z_n\in
Z(f)}(1-|F_\tau(z_n)|)d^{\rho+\ep}(F_\tau(z_n),F_\tau(E^\tau))<\infty,
$$
and the more so,
$$
\sum_{z_n\in Z(f)\cap
L_\d}(1-|F_\tau(z_n)|)d^{\rho+\ep}(F_\tau(z_n),F_\tau(E^\tau))<\infty
$$
for any interior lune $L_\d$. Since
$$ |F_\tau'(z)|\asymp1, \qquad \frac{1-|F_\tau(z)|}{1-|z|}\asymp1,
\quad z\in L_\d, $$ we have
$$
\sum_{z_n\in Z(f)\cap L_\d}(1-|z_n|)d^{\rho+\ep}(z_n,E)<
\sum_{z_n\in Z(f)\cap L_\d}(1-|z_n|)d^{\rho+\ep}(z_n,E^\tau)<\infty,
$$
which is \eqref{localbl} with $\b(1)=0$.

\noindent
 2. The case $\b(1)>0$ is more delicate, as we have to take care of
 the exponent in \eqref{localbl}. By the definition now $|E^\xi|=0$
 for all small enough $\xi>0$. It is not hard to see that we can
 choose two lunes $L_\eta\supset L_\d$ with $\d<\eta<\tau$ such that
 \begin{enumerate}
 \item $\b(E^\d)>\b(1)-\ep/2$.
 \item The vertices of both $L_\eta$
 and $L_\d$ are not in $E$, and $E^\eta=E^\d$.
\end{enumerate}

Let $F_\eta$ be a conformal mapping of $L_\eta$ (not $L_\tau$!) onto
$\D$, $\Phi_\eta=F_\eta^{(-1)}$ the inverse mapping. The function
$g=f(\Phi)$ is analytic in $\D$, and as above
$$ \log|g(w)|\le \frac{C}{d^\rho(\Phi(w),E^\d)}\le
\frac{C}{d^\rho(w,F_\eta(E^\d))}\,. $$
 By the global result and $Z(g)=F_\eta(Z(f))$
$$ \sum_{z_n\in
Z(f)}(1-|F_\eta(z_n)|)d^{(\rho-\b(\hat
E^\d)+\ep/2)_+}(F_\eta(z_n),F_\eta(E^\d))<\infty, \quad \hat
E^\d=F_\eta(E^\d).
$$
Next, Proposition \ref{confinv} applies, so $\b(\hat
E^\d)=\b(E^\d)>\b(1)-\ep/2$, and
$$ \sum_{z_n\in
Z(f)}(1-|F_\eta(z_n)|)d^{(\rho-\b(1)+\ep)_+}(F_\eta(z_n),F_\eta(E^\d))<\infty.
$$
As in the first case, now
$$
\sum_{z_n\in Z(f)\cap
L_\d}(1-|z_n|)d^{(\rho-\b(1)+\ep)_+}(z_n,E)<\infty,
$$
as claimed. The proof is complete.
\end{proof}

\begin{theorem}\label{disjoint}
Let $E=\cup_{j=1}^n E_j$ be a disjoint union of closed sets, $f$ an
analytic function in the unit disk which satisfies
\begin{equation}\label{disjboun}
\log|f(z)|\le \frac{C_f}{\prod_{k=1}^n d^{\rho_k}(z,E_k)}\,, \quad
\rho_k>0. \end{equation}
 Then $\forall\ep>0$
\begin{equation}\label{joinbl}
\sum_{z_n\in Z(f)}(1-|z_n|)\min_k d^{q_k}(z_n,E_k)<\infty, \quad
q_k:=(\rho_k-\b(E_k)+\ep)_+.
\end{equation}
\end{theorem}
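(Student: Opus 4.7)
The plan is to reduce the multiplicative hypothesis \eqref{disjboun} to the single-set hypothesis of Theorem \ref{localfg}, applied once near each $E_k$, and then patch the resulting local estimates together using compactness of $\T$.

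First I would exploit the pairwise disjointness of the closed sets $E_1,\ldots,E_n\subset\T$ to set $3\eta:=\min_{j\ne k}\dist(E_j,E_k)>0$. Fix $k$ and a point $t\in E_k$. For $z$ in any lune $L_\tau(t)$ with $\tau<\eta$ one has $d(z,E_j)\ge\eta$ for every $j\ne k$, so \eqref{disjboun} collapses to the single-set bound
$$
\log|f(z)|\le\frac{C_f\,\eta^{-\sum_{j\ne k}\rho_j}}{d^{\rho_k}(z,E_k)}\,,\qquad z\in L_\tau(t).
$$
This is precisely the hypothesis of Theorem \ref{localfg} with $E_k$ in the role of the ambient singular set and $\rho_k$ in the role of $\rho$. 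That theorem then produces $\d(t)\in(0,\tau)$ and yields
$$
\sum_{z\in Z(f)\cap L_{\d(t)}(t)}(1-|z|)\,d^{(\rho_k-\b_k(t)+\ep)_+}(z,E_k)<\infty,
$$
where $\b_k(t)$ denotes the local type of $E_k$ at $t$.

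Next I would note that $\b_k(t)\ge\b(E_k)$: by the monotonicity \eqref{monota} applied to $E_k\cap\overline{B(t,\sigma)}\subset E_k$ the value $\b(E_k\cap\overline{B(t,\sigma)})$ only grows as $\sigma\downarrow0$, and for $\sigma$ large it already equals $\b(E_k)$. Consequently $(\rho_k-\b_k(t)+\ep)_+\le q_k$, and since $d(z,E_k)\le 2$ the exponent may be raised to $q_k$ at the cost of a harmless multiplicative constant. Combining this with the trivial bound $\min_j d^{q_j}(z,E_j)\le d^{q_k}(z,E_k)$ gives
$$
\sum_{z\in Z(f)\cap L_{\d(t)}(t)}(1-|z|)\min_j d^{q_j}(z,E_j)<\infty.
$$
For a point $t\in\T\setminus E$ I would instead choose $\tau>0$ with $\overline{L_\tau(t)}\cap E=\emptyset$; on such a lune $f$ is bounded, and the classical conformal-mapping argument sketched in the Introduction supplies the analogous bound with the constant weight $1$, which a fortiori dominates the weight $\min_j d^{q_j}(z,E_j)$. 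The open lunes obtained in this way cover $\T$, so by compactness a finite subcollection suffices; its union contains some annulus $\{1-\eta_0<|z|<1\}$, while the zeros of $f$ in $\{|z|\le1-\eta_0\}$ are finite in number. Summing the finitely many convergent local contributions yields \eqref{joinbl}.

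The only real obstacle is the conceptual step of recognising that disjointness of the $E_k$ is precisely what is needed to collapse the product growth \eqref{disjboun} into a single-set bound on each lune $L_\tau(t)$ with $t\in E_k$; once that is in place, Theorem \ref{localfg} does the real work, and replacing its exponent $(\rho_k-\b_k(t)+\ep)_+$ by the stated $q_k$ via \eqref{monota}, together with the compactness patching, is routine.
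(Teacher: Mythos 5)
Your argument is correct and follows essentially the same route as the paper: localize near each $E_k$ where disjointness collapses the product bound \eqref{disjboun} to a single-set bound, invoke Theorem \ref{localfg} with $\rho=\rho_k$ and $\b_k(t)\ge\b(E_k)$, handle points of $\T\setminus E$ by the classical local Blaschke condition, and finish with a finite cover of $\T$ plus the finiteness of the remaining zeros. The only cosmetic difference is that you feed $E_k$ itself into Theorem \ref{localfg}, while the paper keeps the full $E$ and then uses $d(\l,E)=d(\l,E_j)$ on the small lune; the two are equivalent.
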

\begin{proof}For $t\in E_j$ we
apply Theorem \ref{localfg} with $\rho=\rho_j$, $\b(t)\ge\b(E_j)$
and sufficiently small $\tau$, and come to
$$ \sum_{z_n\in Z(f)\cap L_\d(t)}(1-|z_n|) d^{q_j}(z_n,E)<\infty. $$
Assume that $\d<\frac{\De}4$, $\De:=\min_{l,s}(E_l,E_s)>0$, so for
$\l\in L_\d(t)$ and $k\not=j$
$$ d(\l,E_j)<\d<\frac{\De}4, \quad
d(\l,E_k)=|\l-e_k|=|\l-t+t-e_k|>\De-\d>\frac34\,\De, $$ i.e.,
$d(\l,E_k)>3d(\l,E_j)$. Hence
$$ \sum_{z_n\in Z(f)\cap L_\d(t)}(1-|z_n|) d^{q_j}(z_n,E_j)<\infty. $$
When $t\notin E$, for small enough $\d$
$$ \sum_{z_n\in Z(f)\cap L_\d(t)}(1-|z_n|)<\infty, $$
which is the standard local Blaschke condition. It remains only to
choose a finite covering $\T\subset\cup_{p=1}^m B(t_p,\d_p)$ and
take into account that \newline $Z(f)\setminus \cup_{p=1}^m
B(t_p,\d_p)$ is a finite set.
\end{proof}

For the finite $E$ the result was proved in \cite[Theorem 0.3]{bgk}.


\begin{thebibliography}{9999}

\bibitem{AC76}
P. R. Ahern and D. N. Clark, {\it On inner functions
with $B^p$ derivatives}, Michigan Math. J., v.23 (1976), 107--118.

\bibitem{bgk}
A. Borichev, L. Golinskii, and S. Kupin, {\it A Blaschke-type
condition and its application to complex Jacobi matrices}, Bulletin
of the London Math. Sci, v. 41, 2009,  117-–123.

\bibitem{Dj}
M. M. Djrbashian, {\it Theory of Factorization  of Functions
Meromorphic in the Disk.} In: Proceedings of the ICM, Vancouver,
B.C., 1974, vol. 2, 197-202, USA 1975.

\bibitem{FG}
S.~Favorov and L.~Golinskii, {\it A Blaschke-type condition for
analytic and subharmonic functions and application to contraction
operators}, Amer. Math. Soc. Transl (2), v.226 (2009), 37--47.

\bibitem{G}
M. Girnyk, {\it On asymptotic properties of certain canonical
products}. Sib. Math. J., v.15, No 5, (1974), 1036--1048.

\bibitem{G1}
M. Girnyk, {\it On asymptotic properties of certain canonical
products, II}. Sib. Math. J., v.17, No 5, (1976), 967--985.

\bibitem{Gol}
V. V. Golubev, {\it The study on the theory of singular points of
single valued analytic functions}, in book {\it Single-valued
analytic functions. Automorphic functions}, Fiz. Mat. Lit., Moscow,
1961.

\bibitem{Gov}
N. V. Govorov, {\it Riemann's Boundary Problem with
infinite index}, Operator Theory: Advances and Application, v. 67,
Birkh$\ddot{\rm a}$user, 1994.

\bibitem{L1}
C. N.~Linden, {\it The representation of regular functions}, J.
London Math. Soc., v.39 (1964), 19--30.

\bibitem{mamo}
V. I. Matsaev, E. Z. Mogulskii, {\it Theorem on division for
analytic functions with a given majorant and some applications},
Zap. Nauchn. Sem. LOMI (1976), v.56, 72-89 (Russian).

\bibitem{N}
A. C. Naftalevich, {\it On interpolation of meromorphic function in
unit disk}, DAN SSSR, v.58 (1953), No.2 205--208 (Russian).

\bibitem{Nev}
R. Nevanlinna, {\it $\ddot{\rm U}$ber die Eigenschaften meromorpher
Funktionen in einem Winkelraum}, Acta Soc. Sci. Fenn., v.50, No.12
(1925), 1--45.

\bibitem{Sh2}
 F. A. Shamoyan, {\it On zeros of analytic in the disc functions growing near its
boundary.} Journal of Contemporary Mathematical Analysis (Armenian
Academy of Sciences) vol. 18, no. 1, 1983.

\bibitem{T}
M.~Tsuji, {\it Canonical product for a meromorphic function in a
unit circle}, J. Math. Soc. Japan, v.8 (1956), No.1, 7-21.




\end{thebibliography}
\end{document}